\newcommand{\ncm}{\newcommand}
\newtheorem{theorem}{Theorem}[section]
\newtheorem{prop}[theorem]{Proposition}
\newtheorem{lemma}[theorem]{Lemma}
\newtheorem{cor}[theorem]{Corollary}
\newtheorem{lem&def}[theorem]{Lemma \& Definition}
\newtheorem{definition}[theorem]{Definition}
\def\Z{\mathbb{Z}\,} 
\def\N{\mathbb{N}\,}
\ncm{\Ann}{\mbox{\rm Ann}\,}
\ncm{\End}{\mbox{\rm End}\,}
\def\Hom{\mbox{\rm Hom}\,}
\def\|{\, | \,}
\def\id{\mbox{\rm id}}
\def\into{\hookrightarrow}
\ncm{\rarr}[1]{\stackrel{#1}{\longrightarrow}}
\ncm{\larr}[1]{\stackrel{#1}{\longleftarrow}}
\def\eps{\varepsilon}
\def\-2{_{(-2)}}
\def\-1{_{(-1)}}
\def\0{_{(0)}}
\def\1{_{(1)}}
\def\2{_{(2)}}
\def\3{_{(3)}}
\def\du1{\hat 1}
\begin{document}
\title[Separable equivalence of rings and symmetric algebras]{Separable equivalence of rings \\ and symmetric algebras\footnote{Addendum to \textit{Hokkaido Mathematical J.} \textbf{24} (1995), 527-549}}
\author[Lars Kadison]{Lars Kadison} 
\address{Department of Mathematics \\ University of Pennsylvania \\ 
David Rittenhouse Laboratory, 209 S.\ 33rd St.\\ Philadelphia, PA 19109} 
\email{lkadison@math.upenn.edu } 
\subjclass{16D20, 16D90, 16E10}  
\keywords{separable equivalence, Frobenius bimodule, biseparable bimodule, symmetric algebra}
\date{} 

\begin{abstract}
We continue a study of separable equivalence from \cite{LK1995, K93}.  We prove that symmetric separable equivalent rings $A$ and $B$ are linked by a Frobenius bimodule 
${}_AP_B$ such that $A$ is $P$-separable over $B$. Separably equivalent rings are linked by a biseparable bimodule $P$.   In addition, the ring monomorphism $A \into \End P_B$ is split, separable Frobenius.   It is observed that left and right finite projective bimodules over symmetric algebras are  Frobenius bimodules; twisted by the Nakayama automorphisms if
over Frobenius algebras.  
\end{abstract} 
\maketitle

\section{Introduction}
Separable equivalence is a weakening of Morita equivalence which symmetrizes the notion of separable,
split ring extensions that are left and right finitely generated projective \cite{K91, K93, LK1995}.   
For finite-dimensional algebras, separable and symmetric separable equivalence are formally defined in \cite{L,P}, with interesting applications of separable equivalence in \cite{L, BE, P} and further publications in modular and ordinary representation theory.  For example, \cite{L} proves that certain Hecke algebras have finitely generated cohomology algebras, \cite{BE} an upper bound on the representation dimension of certain Hecke algebras of types A, B and D, 
and \cite{P} shows that complexity and representation type are preserved by separable equivalence, while the algebras $k[X]/ (X^n)$ are separably inequivalent for certain different natural numbers $n$ and algebraically closed field $k$ \cite[Theorem 8]{P} (from which it follows that the half quantum groups, Taft Hopf algebras $H_n$ are separably inequivalent at the same integers, by an application of   \cite[p.\ 71]{NEFE}). 

The same concept for rings is defined about twenty years before in \cite[Def.\ 6.1]{K91, LK1995} and its tweek in \cite[Prop.\ 6.3]{K91, LK1995}, which seems not to be known to the authors of \cite{L, BE, P}.   One of the purposes of this addendum is to provide clarification and dispel any obscurity, synchronize terminologies, and also  provide the details of the structure referred to in the paragraph before \cite[Prop.\ 6.1]{LK1995}
as well as  \cite{K91}, which is a suitably weakened Morita structure theory.  

We then characterise symmetric separably equivalent rings $A$ and $B$ as being linked by
a Frobenius bimodule ${}_AP_B$, its dual ${}_BQ_A$, such that $A$ is $P$-separable over $B$ and $B$ is $Q$-separable over $A$.  In the terminology of \cite{CK} the bimodule $P$ is
a biseparable Frobenius bimodule; unlike a Morita context bimodule, it is not in general faithfully balanced, nor are the two split bimodule epis $\nu: P \otimes_B Q \rightarrow A$ and $\mu: Q \otimes_A P \rightarrow B$ in general associative.  We show that $P \otimes_B Q \cong \End P_B$ as rings in terms of
a $\mu$-multiplication that generalises and symmetrises E-multiplication \cite{LK1995, K93, NEFE}. Symmetrically, $Q \otimes_A P$ with the $\nu$-multiplication is isomorphic as rings to $\End Q_A$.  We
note that left and right finite projective bimodules linking Frobenius algebras are automatically twisted
Frobenius bimodules, receiving twists from the Nakayama automorphisms. This  simplifies significantly  the proof that separable equivalence and symmetric separable equivalence coincide as notions for symmetric algebras.

\section{Preliminaries on separably divides} 

It is useful to divide the notion of separable equivalence into two equal parts in order to halve the mathematical exposition. 
At the same time, we establish some fixed notation used throughout the paper. 
Suppose $A$ and $B$ are rings.  They are said to be \textit{linked} if there is a bimodule
${}_AP_B$ or a bimodule ${}_BQ_A$.  A bimodule ${}_AP_B$ is said to be left and right finite projective if
 the restricted  modules $P_B$ and ${}_AP$ are  finitely generated and projective.  
For example,
the natural bimodule ${}_AA_A$ is left and right finite projective, but  projective if and only if $A$ is a separable $\Z$-algebra.  
\begin{definition}
The ring $A$ \textbf{separably divides} $B$ if they are linked by left and right finite projective bimodules ${}_AP_B$
and ${}_BQ_A$, and there is 
a split epimorphism $\nu: P \otimes_B Q \rightarrow A$ of $A$-$A$-bimodules \cite{BE}.  
The ring $A$ \textbf{symmetric separably divides} $B$ if $A$ separably divides $B$ and additionally
the functors $\mathcal{P} = {}_AP \otimes_B -$ and $\mathcal{Q} = {}_B  Q \otimes_A -$
are an adjoint pair $(\mathcal{P}, \mathcal{Q})$ between the categories of all modules, $A$-Mod
and $B$-Mod, with $\nu$ the counit of adjunction.  
\end{definition}

The data $(P,Q,\nu)$ will be called a \textit{context} as in Morita theory. The \textit{center} of a bimodule ${}_AW_A$ is the abelian group $W^A$ of elements $w \in W$ such that
$aw = wa$ for all $a \in A$. Note that $\nu: P \otimes Q \rightarrow A$ being a split
epi of $A$-$A$-bimodules is equivalent to there being 
a (\textit{separability}) element $\sum_i z_i \otimes w_i$ in $(P \otimes_B Q)^A$ such that $\sum_i \nu(z_i \otimes w_i) = 1_A$.

\begin{lemma}
The context modules $Q_A$ and ${}_AP$ are generators. The natural mappings $\lambda: A \rightarrow \End P_B$ and $\rho: A \rightarrow \End {}_BQ$ are injective.  
\end{lemma}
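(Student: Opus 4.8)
The whole argument runs off the separability element. By the remark preceding the statement, the splitting of $\nu$ furnishes an element $\sum_i z_i \o w_i \in (P \o_B Q)^A$ with $\sum_i \nu(z_i \o w_i) = 1_A$, where $z_i \in P$ and $w_i \in Q$. The plan is to use $\nu$ by \emph{partial evaluation} to manufacture the homomorphisms exhibiting $Q_A$ and ${}_AP$ as generators, and then to read off the injectivity of $\lambda$ and $\rho$.

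First I would show $Q_A$ is a generator. For each fixed $z \in P$ set $g_z := \nu(z \o -) \colon Q \to A$. Since $\nu$ is a right $A$-module map, $g_z(wa) = \nu(z \o wa) = \nu(z \o w)a = g_z(w)a$, so $g_z \in \Hom(Q_A, A_A)$. Hence $\sum_i g_{z_i}(w_i) = \sum_i \nu(z_i \o w_i) = 1_A$ lies in the trace ideal $\tr(Q_A) = \sum_{f \in \Hom(Q_A, A_A)} f(Q)$. This trace ideal is two-sided — it is a sum of the right ideals $f(Q)$, and it is closed under left multiplication because $a \cdot f \in \Hom(Q_A, A_A)$ for every $a \in A$ — so, containing the unit, it equals $A$; thus $Q_A$ is a generator. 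The case of ${}_AP$ is symmetric: for fixed $w \in Q$ the map $h_w := \nu(- \o w)\colon P \to A$ is left $A$-linear because $\nu$ is a left $A$-module map, and $\sum_i h_{w_i}(z_i) = 1_A$ places the unit in $\tr({}_AP)$.

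It remains to see that $\lambda$ and $\rho$ are injective. This is immediate from the generator property, since every generator is faithful, but I would record the direct one-line reconstruction to keep the passage self-contained. If $\lambda(a) = 0$, i.e.\ $az = 0$ for all $z \in P$, then by left $A$-linearity of $\nu$,
$$a = a\,1_A = \sum_i a\,\nu(z_i \o w_i) = \sum_i \nu(a z_i \o w_i) = 0 .$$
Symmetrically, if $\rho(a) = 0$ then $wa = 0$ for all $w \in Q$, and
$$a = 1_A\,a = \sum_i \nu(z_i \o w_i)\,a = \sum_i \nu(z_i \o w_i a) = 0 ,$$
so both maps are injective.

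I do not expect a serious obstacle here: the content is carried entirely by the separability element, and the only points needing a moment's care are the verification that the trace ideal is two-sided (so that containing $1_A$ forces it to exhaust $A$) and that each partial evaluation of $\nu$ is $A$-linear on the correct side. Since injectivity already follows abstractly from the generator conclusion, the explicit displays above serve merely to exhibit the witness.
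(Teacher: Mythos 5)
Your proposal is correct and takes essentially the same approach as the paper: both arguments produce $1_A$ inside the trace ideal of $Q_A$ (resp.\ ${}_AP$) by partially evaluating $\nu$ on elements $z_i, w_i$ with $\sum_i \nu(z_i \otimes w_i) = 1_A$, and both obtain injectivity of $\lambda$ and $\rho$ from the generator (hence faithful) property. Your explicit check $a = \sum_i \nu(a z_i \otimes w_i) = 0$ simply makes concrete the paper's abstract appeal to ${}_AA$ being a direct summand of $n \cdot {}_AP$, and your verification that the trace ideal is two-sided is harmless but unnecessary, since a right ideal containing $1_A$ already equals $A$.
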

\begin{proof}
Note that the mapping $P \rightarrow \Hom (Q_A,A_A)$ given by $p \mapsto \nu(p \otimes_B -)$ is an $A$-$B$-bimodule homomorphism.  Since $\nu$ is surjective,  it follows that there are
$z_i \in P$, $w_i \in Q$ such that $\sum_i \nu(z_i \otimes w_i) = 1_A$, i.e., the trace ideal of $Q_A$ is all of $A$. Thus, $Q_A$ is a generator.  One shows similarly that  ${}_AP$ is a generator.

The natural mapping is of course given by $\lambda_a(p) = ap$.  If $\lambda_a = 0$ in $\End P_B$,
then $a$ belongs to the annihilator of the $A$-module $P$, a generator, so ${}_AA \| n \cdot {}_AP$ for some
$n \in \N$, and
$a = 0$ when applied to $1_A$. Thus, $\lambda$ is injective; similarly, $\rho$ is injective.  
\end{proof}
The mappings $\lambda$ and $\rho$ are isomorphisms if the context bimodules are faithfully balanced; for example, Morita context bimodules \cite{AF, Lam}. 
Recall the notion of a separable functor \cite{R}, 
which is a functor $F: \mathcal{C} \rightarrow \mathcal{D}$  with right adjoint $G: \mathcal{D} \rightarrow \mathcal{C}$ such that unit of adjunction
$\eta: 1_{\mathcal{C}}  \stackrel{.}{\rightarrow} GF$ splits \cite[Theorem 1.2]{IS}.  Equivalently,
a separable functor $F$ reflects split exact sequences. 
\begin{theorem}
Ring $A$ separably divides $B$ if and only if the functor $\mathcal{Q} = {}_BQ \otimes_A - $ from $A$-Mod into $B$-Mod, where the bimodule ${}_BQ_A$ is left and right finite projective,  is a separable functor.
\end{theorem}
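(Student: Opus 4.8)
The plan is to present both implications through the standard adjunction $(\mathcal{Q},G)$ in which $G=\Hom({}_BQ,-)$ is right adjoint to $\mathcal{Q}=Q\otimes_A-$, and to invoke the criterion \cite[Theorem 1.2]{IS}: $\mathcal{Q}$ is separable precisely when the unit $\eta:1_{A\text{-Mod}}\to G\mathcal{Q}=\Hom({}_BQ,{}_B(Q\otimes_A-))$, $\eta_M(m)(q)=q\otimes m$, is a split monomorphism of functors. Since ${}_BQ$ is finite projective, a left dual basis $\{y_j,h_j\}$ (with $h_j\in\Hom({}_BQ,{}_BB)$ and $q=\sum_j h_j(q)y_j$) gives $G\mathcal{Q}\cong\Hom({}_BQ,{}_BB)\otimes_B Q\otimes_A-$, under which $\eta_M(m)$ becomes $e\otimes m$ for the coevaluation $e=\sum_j h_j\otimes y_j$; a short computation with the dual-basis relations shows $e\in(\Hom({}_BQ,{}_BB)\otimes_B Q)^A$. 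Hence a natural retraction of $\eta$ is exactly an $A$-$A$-bimodule map $g:\Hom({}_BQ,{}_BB)\otimes_B Q\to A$ with $g(e)=1_A$.

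For the forward implication I would take the data witnessing that $A$ separably divides $B$: a finite projective ${}_AP_B$ and a separability element $\sum_i z_i\otimes w_i\in(P\otimes_B Q)^A$ with $\sum_i\nu(z_i\otimes w_i)=1_A$. The retraction is then written down directly as
\[
\theta_M(\psi)=(\nu\otimes\id_M)\Big(\sum_i z_i\otimes\psi(w_i)\Big),\qquad \psi\in\Hom({}_BQ,{}_B(Q\otimes_A M)),
\]
where $\sum_i z_i\otimes\psi(w_i)$ is $(\id_P\otimes\psi)$ applied to $\sum_i z_i\otimes w_i$ and we use $P\otimes_B(Q\otimes_A M)\cong(P\otimes_B Q)\otimes_A M$. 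The equality $\theta_M\eta_M=\id_M$ is immediate from $\sum_i\nu(z_i\otimes w_i)=1_A$, while left $A$-linearity of $\theta_M$ and naturality in $M$ follow from the centrality of $\sum_i z_i\otimes w_i$ together with the bimodule property of $\nu$. Thus $\eta$ splits and $\mathcal{Q}$ is separable.

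For the converse I would read the same correspondence in reverse: separability of $\mathcal{Q}$ provides the bimodule map $g$ with $g(e)=1_A$. I then set $P:=\Hom({}_BQ,{}_BB)$ and $\nu:=g$. Because $e$ is $A$-central, $a\mapsto ea$ is a bimodule section of $\nu$, so $\nu:P\otimes_B Q\to A$ is a split epimorphism of $A$-$A$-bimodules; it remains only to see that ${}_AP_B$ is finite projective. The right-hand finiteness, that $P_B=\Hom({}_BQ,{}_BB)_B$ is finite projective, is automatic, being the $B$-dual of the finite projective ${}_BQ$.

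The main obstacle is the left-hand finiteness, that ${}_AP={}_A\Hom({}_BQ,{}_BB)$ is finite projective, and this is the one step that must genuinely use separability rather than the two projectivity hypotheses alone. Indeed, for $B\subseteq A$ with $A$ finite projective over $B$ on both sides and $Q={}_BA_A$ one has $\mathcal{Q}\cong\Res$, and $\Hom({}_BA,{}_BB)$ fails to be $A$-finite-projective exactly when the extension is inseparable (for $B$ a field it is the injective, usually non-projective, dual $\Hom({}_BA,{}_BB)$, which is separable as a functor only when $A$ is semisimple). I expect to recover the missing finiteness by pairing the right dual basis of $Q_A$ against the separability datum $(g,e)$ so as to realize ${}_AP$ as a direct summand of a finite free left $A$-module; this is the part of the argument I would treat most carefully, since the obvious candidate map $P\to\Hom(Q_A,A_A)$, $h\mapsto g(h\otimes-)$, need not be a split monomorphism.
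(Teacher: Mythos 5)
Your proposal follows the paper's own proof, in both directions. The forward implication is the paper's verbatim: its natural section $\nu_M(f)=\sum_i z_i\cdot f(w_i)$ of the unit of adjunction is exactly your $\theta_M$, justified the same way (centrality of the separability element plus $\sum_i\nu(z_i\otimes w_i)=1_A$). The backward implication also makes the paper's construction: $P:=\Hom ({}_BQ,{}_BB)$, the identification $G\mathcal{Q}\cong P\otimes_BQ\otimes_A-$ coming from finite projectivity of ${}_BQ$, and the observation that a natural retraction of the unit amounts to an $A$-$A$-bimodule map $\nu:P\otimes_BQ\to A$ with $\nu(e)=1_A$, split by $a\mapsto ea$ (this is the content of the paper's ``by naturality and a usual argument'').

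The step you say you cannot finish---that ${}_AP={}_A\Hom ({}_BQ,{}_BB)$ is finitely generated projective, which Definition 2.1 does require of a context bimodule---is precisely where the paper's proof stops as well: it is nowhere addressed there. So, measured against the paper, you have reproduced its entire argument and in addition isolated a genuine gap in it. (One correction to your motivation: your restriction example cannot witness a failure under the separability hypothesis, since for $B=k$ separability of restriction forces $A$ semisimple, and then the dual is projective; also ``fails \ldots exactly when inseparable'' is not right, as Frobenius extensions have projective duals irrespective of separability. A different kind of example is needed.)

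Your pessimism is nonetheless justified: the missing step cannot be filled, so the $(\Leftarrow)$ implication is false as literally stated. Take $R=k[x]$, $N=k[x]/(x)$, let $B$ be the triangular matrix ring $\bigl(\begin{smallmatrix}R&N\\0&k\end{smallmatrix}\bigr)$ with diagonal idempotents $e,f$, and set $A:=eBe\cong R$, $Q:=Be$. Since $fBe=0$ we get $Be=eBe$, so ${}_BQ$ and $Q_A$ are finite projective and the unit $X\to\Hom ({}_BQ,{}_BQ\otimes_AX)\cong eBe\otimes_AX=X$ is an isomorphism; hence $\mathcal{Q}$ is separable. Yet ${}_AP\cong{}_AeB\cong R\oplus N$ is not projective, and in fact no context at all can witness that $A$ separably divides $B$: for any finitely generated projective right $B$-module $P'$ with a commuting left $A$-action, the Peirce component $P'f$ is finite dimensional over $k$, hence must vanish if ${}_AP'$ is to be projective (i.e.\ free) over $k[x]$; projectivity of $P'_B$ then forces the map $P'e\otimes_RN\to P'f=0$ given by the right action of $N=eBf$ to be injective (it is injective for $B^n$ and restricts to summands), so $P'e=P'e\,x$, and since $P'e$ is a finitely generated projective right $k[x]$-module this gives $P'e=0$, whence $P'=0$, contradicting the required split epimorphism onto $A$. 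What your argument (and the paper's) actually proves is the weaker statement that $\nu:\Hom ({}_BQ,{}_BB)\otimes_BQ\to A$ is a split epimorphism of $A$-$A$-bimodules with the dual finite projective on its $B$-side only; repairing the theorem requires either reading the conclusion in that weaker sense or adding hypotheses on $A$ and $B$. In short, the gap you found is the paper's gap, not a defect of your reconstruction.
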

\begin{proof}
($\Leftarrow$) Note that the functor $F = \mathcal{Q} = Q \otimes_A -$ has right adjoint functor $G = \Hom ({}_BQ, {}_B-)$ from $B$-Mod into $A$-Mod \cite{AF}.  Since $Q$ is left and right finite projective, $G$ is naturally isomorphic to ${}_AP \otimes_B - $ where $P := \Hom ({}_BQ, {}_BB)$.  The unit of adjunction is the natural transformation $\eta_M: M \rightarrow \Hom ({}_BQ, {}_BQ \otimes_A M)$ that for each $A$-module $M$ (and $m \in M$) sends
\begin{equation}
\label{eq: uofa}
m \longmapsto (q \mapsto q \otimes_A m) 
\end{equation}
which is obviously natural with respect to arrows ${}_AM \rightarrow {}_AN$.  Assume that the unit of adjunction
splits, i.e. has a  section natural transformation  $\nu_M: \Hom ({}_BQ, {}_BQ \otimes_A M) \rightarrow M$, natural with respect to arrows ${}_AM \rightarrow {}_AN$ and satisfying $\nu_M \circ \eta_M = \id_M$ for every ${}_AM$. Then $$\nu := \nu_A: \End {}_BQ \cong
\Hom ({}_BQ, {}_BB) \otimes_B Q \cong P \otimes Q  \longrightarrow A$$
is split by the mapping $\eta_A(a) = \rho_a$ where $\rho_a(q) = qa$ for each $q \in Q, a \in A$.  By naturality and a usual argument, $\nu$ is an $A$-$A$-bimodule split epi.  

($\Rightarrow$) Given a split bimodule epi $\nu': {}_AP \otimes_B Q_A \rightarrow {}_AA_A$, denote
values of $\nu'$ by $\nu'(p \otimes q) = p \cdot q \in A$. Then there is a separability element
$\sum_i z_i \otimes w_i$ where $z_i \in P, w_i \in Q$.  Given the unit of adjunction $\eta_M$ in Eq.~(\ref{eq: uofa})
for the functor $\mathcal{Q}$ with right adjoint $\Hom ({}_BQ, {}_B-)$, we define a natural splitting
or section $\nu_M: \Hom ({}_BQ, {}_BQ \otimes_A M) \rightarrow M$ for each module ${}_AM$, by 
$f \mapsto \sum_i z_i \cdot f(w_i) \in M$.  This satisfies $\nu_M \circ \eta_M = \id_M$ up to natural isomorphism $A \otimes_A M \cong M$, since $\sum_i z_i \cdot w_i = 1_A$. It follows that
$\mathcal{Q}$ is a separable functor. 
\end{proof}
  The next proposition explains more fully the ``elements of adjunction'' in \cite[Def.\ 6.1]{LK1995}, which are useful for computations.  

\begin{prop}
\label{prop-adjelts}
Given bimodules ${}_AP_B,{}_BQ_A$ and bimodule homomorphism $ \nu: {}_AP\otimes_B Q_A \rightarrow {}_AA_A$, the functor $\mathcal{P} =  P \otimes_B -$: $B$-Mod $\rightarrow A$-Mod is a left adjoint of $\mathcal{Q} = Q \otimes_A - $ with counit of adjunction $\nu$ if and only if there is an element  $\sum_i {q'}_i \otimes {p'}_i \in (Q \otimes_A P)^B$  such that for every $p \in P, q \in Q$ we have 
\begin{eqnarray}
\label{eq1}
\sum_i \nu(p \otimes {q'}_i)  {p'}_i &=&  p  \\
 \label{eq2} 
\sum_i {q'}_i \nu({p'}_i \otimes q) &=&  q
\end{eqnarray}
\end{prop}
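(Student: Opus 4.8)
The plan is to invoke the standard characterization of an adjunction in terms of a unit and a counit satisfying the two triangle (zig-zag) identities, here with the counit \emph{prescribed} to be the natural transformation $\eps$ induced by $\nu$. Since $\nu$ is assumed to be an $A$-$A$-bimodule map, the rule $p \o q \o m \mapsto \nu(p \o q)\,m$ defines a natural transformation of left $A$-modules $\eps_M : \mathcal{P}\mathcal{Q}(M) = P \o_B Q \o_A M \to M$, using $A \o_A M \cong M$. Thus the proposition reduces to the claim that a unit $\eta$ making $(\mathcal{P},\mathcal{Q})$ an adjoint pair with counit $\eps$ exists if and only if there is an element $\sum_i q_i' \o p_i' \in (Q \o_A P)^B$ satisfying (\ref{eq1}) and (\ref{eq2}), and that these two kinds of data correspond to one another.

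First I would record the elementary fact that, for any $B$-$B$-bimodule $W$, the natural transformations $1_{B\text{-Mod}} \to W \o_B -$ are in bijection with the center $W^B$, the bijection sending $\eta$ to the element $e := \eta_B(1_B) \in W \o_B B \cong W$. Applied to $W = Q \o_A P$, naturality forces every component of a candidate unit to be $\eta_N(n) = e \o_B n$ for this fixed $e$. Testing naturality against left multiplications shows $\eta_B$ is left $B$-linear, while testing it against the right multiplications $r_b : {}_BB \to {}_BB$ forces $b \cdot e = e \cdot b$ for all $b \in B$, that is, $e \in (Q \o_A P)^B$. Conversely, a central element $e = \sum_i q_i' \o p_i'$ yields a well-defined natural transformation $\eta_N(n) = \sum_i q_i' \o p_i' \o_B n$, centrality being exactly what guarantees $B$-linearity. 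This identifies the admissible units precisely with the central elements, which supplies the element $\sum_i q_i' \o p_i'$ of the statement.

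With $\eta$ so described, I would then translate the two triangle identities into the displayed equations. The identity $(\eps \mathcal{P}) \circ (\mathcal{P}\eta) = \id_{\mathcal{P}}$, evaluated on $p \o_B n \in P \o_B N$, reads $\sum_i \nu(p \o q_i')\,p_i' \o_B n = p \o_B n$; taking $N = {}_BB$ with $n = 1_B$ and using $P \o_B B \cong P$ gives precisely (\ref{eq1}). Symmetrically, $(\mathcal{Q}\eps) \circ (\eta \mathcal{Q}) = \id_{\mathcal{Q}}$, evaluated on $q \o_A m \in Q \o_A M$, reads $\sum_i q_i'\,\nu(p_i' \o q) \o_A m = q \o_A m$; at $M = {}_AA$ with $m = 1_A$ and $Q \o_A A \cong Q$, this is (\ref{eq2}). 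Finally, since both sides of each triangle identity are natural transformations, and any $p \o_B n$ is the image of $p \o_B 1_B$ under $\mathcal{P}(g)$ for the $B$-map $g:{}_BB \to N$, $1_B \mapsto n$ (and dually on the $\mathcal{Q}$ side), naturality propagates the equalities from the free modules ${}_BB$ and ${}_AA$ to all modules. Hence (\ref{eq1}) and (\ref{eq2}) are equivalent to the full triangle identities, and both directions of the proposition follow.

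The only genuine content, as opposed to bookkeeping, is the equivalence in the second paragraph: that the centrality condition $\sum_i q_i' \o p_i' \in (Q \o_A P)^B$ is both necessary and sufficient for $\eta$ to be a well-defined natural transformation of \emph{left} $B$-modules. The remaining work consists of the canonical identifications $A \o_A M \cong M$, $P \o_B B \cong P$, $Q \o_A A \cong Q$ together with careful tracking of which ring acts on which tensor factor; this is routine but must be done consistently, so that in the second triangle identity the scalar $\nu(p_i' \o q) \in A$ is transported across $\o_A$ correctly. Once the bijection between units and central elements is established and the triangle identities are matched to (\ref{eq1}) and (\ref{eq2}), the proposition follows from the unit--counit description of adjoint functors.
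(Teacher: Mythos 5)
Your proof is correct, but it takes a genuinely different route from the paper's, most visibly in the ($\Leftarrow$) direction. The paper's ($\Rightarrow$) is close in spirit to yours---it reads Eqs.~(\ref{eq1}) and~(\ref{eq2}) as the two triangular identities---but it extracts the unit element and its centrality from the hom-set form of adjunction, $\Hom ({}_AP \otimes_B M, {}_AN) \cong \Hom ({}_BM, {}_BQ \otimes_A N)$, as the image of $\id_P$ (at $M = {}_BB$, $N = {}_AP$) and the preimage of $\id_Q$ (at $M = {}_BQ$, $N = {}_AA$), whereas you derive the same data from a general classification lemma: natural transformations from the identity functor on $B$-Mod to $W \otimes_B -$ correspond to elements of the center $W^B$. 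For ($\Leftarrow$) the two proofs really diverge: the paper abandons the unit--counit formalism and instead writes down the adjunction bijection explicitly, $\Phi_{M,N}(f) = (m \mapsto \sum_i {q'}_i \otimes_A f({p'}_i \otimes_B m))$ with inverse $\Psi_{M,N}(g) = (p \otimes m \mapsto (\nu \otimes \id_N)(p \otimes g(m)))$, checking mutual inverseness from the two equations; you instead verify the two zig-zag identities for the unit determined by the central element, propagating from the free modules ${}_BB$ and ${}_AA$ by naturality. Your treatment is more uniform (one characterization of adjunction throughout, the two directions symmetric) and makes the centrality hypothesis conceptually forced, since it is exactly what makes the unit a well-defined natural transformation of left-module functors; the paper's construction buys explicit formulas that are reused downstream, e.g.\ Lemma~\ref{lemma-oneforall} invokes $\Psi_{Q,A}$ directly, a map your argument would have to reconstruct. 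One small inaccuracy to repair: you claim that ``testing naturality against left multiplications shows $\eta_B$ is left $B$-linear,'' but left multiplication by $b$ is not a morphism of left $B$-modules, so it cannot be used to test naturality; nothing is lost, however, since each component of a natural transformation valued in $B$-Mod is left $B$-linear by definition, and your use of the right multiplications $r_b$ (which are left $B$-linear) to force centrality is correct.
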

\begin{proof}
($\Rightarrow$)  The eqs.~(\ref{eq1}) and~(\ref{eq2}) are the two triangular identities \cite[p.\ 85]{M} characterising adjunction, for the unit of adjunction $\eta_B: b \mapsto \sum_i b{q'}_i \otimes_A {p'}_i$, $\eta: 1 \stackrel{\cdot}{\rightarrow} \mathcal{Q}\mathcal{P}$, and the counit of adjunction $\eps_A = \nu$, $\eps: \mathcal{P}\mathcal{Q} \stackrel{\cdot}{\rightarrow} 1$. The triangular identities are given functorially by $(\eps \mathcal{P}) (\mathcal{P} \eta) = 1_{\mathcal{P}}$ and $(\mathcal{Q} \eps)(\eta \mathcal{Q}) = 1_{\mathcal{Q}}$.  In the usual set-up of adjoint functors, there is a natural  isomorphism
$\Hom ({}_AP \otimes_B M,{}_AN) \cong \Hom ({}_BM, {}_BQ \otimes_A N)$ for each $B$-module $M$
and  $A$-module $N$.  Letting $M = {}_BQ$ and $N = {}_AA$ obtains counit of adjunction $\nu: P\otimes_B Q \rightarrow A$ as the inverse image of
$\id_Q$, which is in $\End ({}_BQ_A)$, so by naturality $\nu$ is a bimodule homomorphism.  Similarly, 
letting $M = {}_BB$ and $N = {}_AP$ obtains the unit of adjunction  $\sum_i {q'}_i \otimes_A {p'}_i \in Q \otimes_A P$, up to usual identifications, as the
image of $\id_P$, and is in the center.  

($\Leftarrow$) This is a healthy pedestrian exercise showing 
\begin{equation}
\label{eq: adjunction}
\Hom ({}_AP \otimes_B M,{}_AN) \cong \Hom ({}_BM, {}_BQ \otimes_A N)
\end{equation}
given eqs.~(\ref{eq1}) and ~(\ref{eq2}). 
 The forward mapping is  $$\Phi_{M,N}: f \longmapsto (m \mapsto
\sum_i {q'}_i \otimes_A f({p'}_i \otimes_B m)) $$
for arbitrary $f: {}_AP \otimes_B M \rightarrow {}_AN$. The 
inverse mapping is 
$$\Psi_{M,N}: g \longmapsto (p \otimes m \mapsto (\nu \otimes \id_N)(p \otimes g(m)) $$
for arbitrary $g: {}_BM \rightarrow {}_BQ \otimes_A N$. 
 Eq.~(\ref{eq1}) implies $\Psi_{M,N} (\Phi_{M,N}(f)) = f$ and eq.~(\ref{eq2}) implies $\Phi_{M,N}( \Psi_{M,N}(g)) = g$.   
\end{proof}
Note that the proof does not make use of the conditions, split epi or epi, on the homomorphism $\nu$ used earlier.    Also note that 
\begin{equation}
\label{eq: centered}
\sum_i {q'}_i \otimes_A {p'}_i \in (Q \otimes_A P)^B
\end{equation}
follows from eqs.~(\ref{eq1}) and ~(\ref{eq2}) 
since $\sum_i b{q'}_i \otimes_A {p'}_i = \sum_{i,j} {q'}_j \nu({p'}_j \otimes_B b{q'}_i) \otimes_A {p'}_i $
$$= \sum_{i,j} {q'}_j \otimes_A \nu({p'}_j b \otimes_B {q'}_i){p'}_i = \sum_j {q'}_j \otimes_A {p'}_j b .$$
The next two lemmas use only the hypotheses of Proposition~\ref{prop-adjelts}. 
\begin{lemma}
\label{lemma-oneforall}
Given adjoint functors $(\mathcal{P},\mathcal{Q})$, the counit of adjunction is a split epi if any one bimodule homomorphism $\phi: P \otimes_B Q \rightarrow A$ is a split epi.
\end{lemma}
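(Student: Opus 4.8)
The plan is to reduce the statement to a single structural fact: every $A$-$A$-bimodule homomorphism $P \otimes_B Q \to A$ factors through the counit $\nu$. Precisely, I would show that for an arbitrary bimodule map $\phi\colon {}_AP \otimes_B Q_A \to {}_AA_A$ there exists $g \in \End({}_BQ_A)$ with $\phi = \nu \circ (\id_P \otimes g)$, where $\id_P \otimes g$ is the induced $A$-$A$-bimodule endomorphism of $P \otimes_B Q$. Conceptually this is just the two-sided restriction of the adjunction isomorphism of Proposition~\ref{prop-adjelts}, namely $\Hom({}_AP \otimes_B Q_A, {}_AA_A) \cong \End({}_BQ_A)$ with $\nu$ corresponding to $\id_Q$; but I would prefer to produce $g$ explicitly so as to keep the argument elementary.

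Using the adjunction element $\sum_i {q'}_i \otimes {p'}_i \in (Q \otimes_A P)^B$, I define $g(q) = \sum_i {q'}_i\, \phi({p'}_i \otimes q)$. Right $A$-linearity of $g$ is inherited immediately from $\phi$. Left $B$-linearity is the one point that genuinely uses the hypotheses of Proposition~\ref{prop-adjelts}: applying the well-defined $A$-balanced map $q'' \otimes p'' \mapsto q''\, \phi(p'' \otimes q)$ to the centrality relation~(\ref{eq: centered}), $\sum_i b{q'}_i \otimes {p'}_i = \sum_i {q'}_i \otimes {p'}_i b$, converts $g(bq)$ into $b\, g(q)$. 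Hence $g \in \End({}_BQ_A)$. The identity $\phi(p \otimes q) = \nu(p \otimes g(q))$ then drops out by extracting the scalars $\phi({p'}_i \otimes q)$ through the right $A$-linearity of $\nu$, reabsorbing the scalars $\nu(p \otimes {q'}_i)$ through the left $A$-linearity of $\phi$, and applying the triangular identity~(\ref{eq1}).

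With this factorization the conclusion is purely formal. Since $g$ is left $B$-linear and right $A$-linear, $\id_P \otimes g$ is a well-defined $A$-$A$-bimodule endomorphism of $P \otimes_B Q$ satisfying $\nu \circ (\id_P \otimes g) = \phi$. If $\phi$ is a split bimodule epimorphism, with bimodule section $\sigma\colon A \to P \otimes_B Q$ so that $\phi \circ \sigma = \id_A$, then $(\id_P \otimes g) \circ \sigma$ is again a bimodule map and $\nu \circ \big( (\id_P \otimes g) \circ \sigma \big) = \phi \circ \sigma = \id_A$. Thus $\nu$ is split epi, as required.

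I expect the only real obstacle to be the verification that $g$ lands in $\End({}_BQ_A)$, i.e. that it is left $B$-linear and not merely right $A$-linear; this is exactly the step that forces one to use centrality of the adjunction element in $(Q \otimes_A P)^B$, and it would be easy to overlook. Everything after the factorization is a one-line diagram chase.
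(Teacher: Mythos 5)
Your proof is correct and takes essentially the same route as the paper: the paper invokes the naturality restriction of the adjunction isomorphism of Proposition~\ref{prop-adjelts} to get $\Hom ({}_AP \otimes_B Q_A, {}_AA_A) \cong \End {}_BQ_A$, writes $\phi = \nu \circ (\id_P \otimes g)$ for some $g \in \End {}_BQ_A$, and composes the given splitting $h$ with $\id_P \otimes g$, exactly as you do. Your explicit formula $g(q) = \sum_i {q'}_i\, \phi({p'}_i \otimes q)$ is precisely the map $\Phi_{Q,A}(\phi)$ from that proposition written out, so your hand verifications (left $B$-linearity via the centrality relation~(\ref{eq: centered}) and the factorization via Eq.~(\ref{eq1})) simply make explicit what the paper delegates to the already-established bijection.
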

\begin{proof}
From Eq.~(\ref{eq: adjunction}) with $M = {}_BQ$ and $N = {}_AA$, we obtain from naturality  
\begin{equation}
\Hom ({}_AP \otimes_B Q_A, {}_AA_A) \cong \End {}_BQ_A
\end{equation}
 as abelian groups via for each $g \in \End {}_BQ_A$, 
$\Psi_{Q,A}(g) = \nu(P \otimes g) = \phi: P \otimes_B Q \rightarrow A$.  If $h$ is a splitting for $\phi$ then $(P \otimes g) h$ is a splitting for $\nu$.  
\end{proof}

\begin{lemma}
\label{lemma-onto}
There is a $B$-$A$-bimodule isomorphism $Q \stackrel{\cong}{\longrightarrow} \Hom ({}_AP, {}_AA)$
given by $q \mapsto \nu( - \otimes_B q)$.
\end{lemma}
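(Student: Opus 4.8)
The plan is to produce an explicit two-sided inverse to $\Theta := \nu(- \otimes_B -) \colon q \mapsto \nu(-\otimes_B q)$, built out of the adjunction element $\sum_i {q'}_i \otimes_A {p'}_i \in (Q \otimes_A P)^B$ furnished by Proposition~\ref{prop-adjelts}, and then to read off bijectivity directly from the triangular identities~(\ref{eq1}) and~(\ref{eq2}). Since these are exactly the hypotheses of that proposition, which the excerpt permits me to assume, no adjunction machinery beyond the two elementwise identities is needed.

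First I would fix the $B$-$A$-bimodule structure on $\Hom({}_AP,{}_AA)$: for $f \in \Hom({}_AP,{}_AA)$, $b \in B$ and $a \in A$ one sets $(b\cdot f)(p) = f(pb)$ and $(f\cdot a)(p) = f(p)a$, using the right $B$-action on $P$ and the right $A$-action on $A$; both operations preserve left $A$-linearity of $f$. That $\Theta(q)$ lies in $\Hom({}_AP,{}_AA)$ and that $\Theta$ is a $B$-$A$-bimodule homomorphism is routine and uses only that $\nu$ is an $A$-$A$-bimodule map balanced over $B$: from $\nu(p \otimes_B bq) = \nu(pb \otimes_B q)$ one gets $\Theta(bq) = b\cdot\Theta(q)$, and from right $A$-linearity of $\nu$ one gets $\Theta(qa) = \Theta(q)\cdot a$.

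The key step is the inverse. Using the adjunction element I define $\Xi\colon \Hom({}_AP,{}_AA) \to Q$ by $\Xi(f) = \sum_i {q'}_i\, f({p'}_i)$, the right $A$-action on $Q$ absorbing the scalars $f({p'}_i) \in A$. Then $\Xi\circ\Theta = \id_Q$ is immediate from~(\ref{eq2}), since $\Xi(\Theta(q)) = \sum_i {q'}_i\,\nu({p'}_i \otimes_B q) = q$. For the other composite the computation I would carry out is, for every $p \in P$,
\[
\Theta(\Xi(f))(p) = \sum_i \nu\bigl(p \otimes_B {q'}_i\, f({p'}_i)\bigr) = \sum_i \nu(p \otimes_B {q'}_i)\, f({p'}_i) = \sum_i f\bigl(\nu(p \otimes_B {q'}_i)\,{p'}_i\bigr) = f(p),
\]
where the second equality is right $A$-linearity of $\nu$, the third is left $A$-linearity of $f$, and the last is~(\ref{eq1}). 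Hence $\Theta\circ\Xi = \id$ and $\Theta$ is the desired bimodule isomorphism.

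The only place demanding care is this last display: one must move the scalar $f({p'}_i) \in A$ first out of $\nu$, using that $\nu$ is a right $A$-module map, and then inside $f$, using that $f$ is a left $A$-module map, before the triangular identity~(\ref{eq1}) can be applied. There is no genuine obstacle here; the entire content is the interplay of~(\ref{eq1}) and~(\ref{eq2}) with the one-sided linearities of $\nu$ and $f$. Alternatively, one could argue abstractly: the Hom-tensor adjunction makes $\Hom({}_AP,{}_A-)$ a right adjoint of $\mathcal{P} = P \otimes_B -$, which by hypothesis is also $\mathcal{Q} = Q \otimes_A -$, so uniqueness of adjoints yields $\Hom({}_AP,{}_A-) \cong Q \otimes_A -$ and, at $N = A$, an isomorphism $\Hom({}_AP,{}_AA) \cong Q$; but identifying that abstract isomorphism with the stated map $q \mapsto \nu(-\otimes_B q)$ reduces once more to the computation above, so I would simply present the explicit inverse $\Xi$.
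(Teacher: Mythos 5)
Your proposal is correct and matches the paper's own argument: the paper's first proof uses exactly your inverse $g \mapsto \sum_i {q'}_i g({p'}_i)$ verified via eqs.~(\ref{eq1}) and~(\ref{eq2}), and the abstract alternative you sketch (uniqueness of right adjoints applied to $\mathcal{Q}(A) \cong \Hom({}_AP,{}_AA)$) is precisely the paper's second proof. You have simply supplied the routine verifications that the paper leaves implicit.
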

\begin{proof}
An inverse is given for $g \in \Hom ({}_AP, {}_AA)$ by $g \rightarrow \sum_i {q'}_i g({p'}_i) \in Q$ 
using eqs.~(\ref{eq1}) and~(\ref{eq2}). 

A second proof: from the hom-tensor adjoint relation,  the functor $\mathcal{P}$ has right adjoint
the functor (of coinduction) ${}_B\Hom ({}_AP, {}_A -)$ in general, and by hypothesis also the right adjoint $\mathcal{Q}$. From the  uniqueness of adjoint functors up to natural  isomorphism \cite[p.\ 247]{AF},  the lemma follows from $\mathcal{Q}(A) \cong \Hom ({}_AP, {}_AA)$. 
\end{proof}
\section{Preliminaries on separable equivalence}
We continue to establish fixed notation used in the rest of the paper.
\begin{definition}
Rings $A$ and $B$ are \textbf{separably equivalent} if $A$ separably divides $B$ with context split epi $\nu: P \otimes_B Q \rightarrow A$ and $B$ separably divides $A$ with context split epi $\mu: {}_BQ \otimes_A P_B \rightarrow {}_BB_B$.   Rings $A$ and $B$ are \textbf{symmetric separably equivalent} if $A$ symmetric separably divides $B$ with adjoint functors $(\mathcal{P} =  {}_AP \otimes_B - , \mathcal{Q} = {}_BQ \otimes_A - )$ and $B$ symmetric separably divides $A$ with adjoint functors
$(\mathcal{Q}, \mathcal{P})$ \cite[Def.\ 6.1, Remark 6.1]{LK1995}.  In other words,  the functors $\mathcal{P}$ and $\mathcal{Q}$ between $A$-Mod and $B$-Mod are Frobenius functors \cite{M67}, i.e., adjoint in either order. 
\end{definition}

Shared properties of separably equivalent rings include global dimension \cite[Corollary 6.1]{LK1995}
and other  homological properties of rings defined in \cite[Theorem 6.1]{LK1995}, such as a ring whose flat modules are projective, which characterises  von Neumann regular rings.

\begin{prop}
Suppose rings $A$ and $B$ are separably equivalent with context $(P,Q,\mu,\nu)$ defined above.  
There is a ring structure on $P \otimes_B Q$ with multiplication defined by 
$$
(p \otimes_B q)(p' \otimes_B q') = p \mu(q \otimes_A p') \otimes_B q',$$
and linearly extended (call it $\mu$-multiplication).  Then the ring $P \otimes_B Q$ is unital  if and only if $A$ and $B$ are symmetric separably equivalent.
\end{prop}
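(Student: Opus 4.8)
The plan is to treat this in three stages: verify that the $\mu$-multiplication is a well-defined associative (a priori non-unital) ring structure, then match two-sided units of $(P\otimes_B Q, \mu)$ exactly with the adjunction data of Proposition~\ref{prop-adjelts} read with the roles of $(A,B,P,Q,\nu)$ and $(B,A,Q,P,\mu)$ interchanged, and finally confront the passage from that single adjunction to the full symmetric separable equivalence. For well-definedness I would check that $(p\otimes_B q)(p'\otimes_B q')=p\,\mu(q\otimes_A p')\otimes_B q'$ respects each tensor relation using only that $\mu$ is a $B$-$B$-bimodule map: the relation $pb\otimes q=p\otimes bq$ is handled by $\mu(bq\otimes p')=b\,\mu(q\otimes p')$, and $p'b\otimes q'=p'\otimes bq'$ by $\mu(q\otimes p'b)=\mu(q\otimes p')b$ together with moving the $B$-scalar across $\otimes_B$. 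Associativity is the same computation twice: both bracketings collapse to $p\,\mu(q\otimes p')\,\mu(q'\otimes p'')\otimes q''$ by right $B$-linearity of $\mu$. This stage is entirely routine.

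The core is that a two-sided unit $e=\sum_j p''_j\otimes_B q''_j$ is precisely an element of $(P\otimes_B Q)^A$ satisfying the two triangle identities $\sum_j \mu(q\otimes_A p''_j)\,q''_j=q$ and $\sum_j p''_j\,\mu(q''_j\otimes_A p)=p$, which are eqs.~(\ref{eq1}) and~(\ref{eq2}) under the interchange above. For ($\Leftarrow$), symmetric separable equivalence includes that $B$ symmetric separably divides $A$, i.e.\ $(\mathcal{Q},\mathcal{P})$ is an adjoint pair with counit $\mu$; the interchanged Proposition~\ref{prop-adjelts} then supplies such an $e\in(P\otimes_B Q)^A$, and computing $e\,(p\otimes q)=\sum_j p''_j\mu(q''_j\otimes p)\otimes q$ and $(p\otimes q)\,e=p\otimes\sum_j\mu(q\otimes p''_j)q''_j$ and invoking the two identities shows $e$ is a unit. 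For ($\Rightarrow$), a unit only yields the identities tensored by a spectator factor: $\big(\sum_j p''_j\mu(q''_j\otimes p)-p\big)\otimes_B q=0$ and $p\otimes_B\big(\sum_j\mu(q\otimes p''_j)q''_j-q\big)=0$ for all $p,q$. To strip the spectator I would use the separability element $\sum_k s_k\otimes_A t_k\in(Q\otimes_A P)^B$ of the split epi $\mu$, with $\sum_k\mu(s_k\otimes t_k)=1_B$: if $x\otimes_B q=0$ for all $q$ then $\sum_k x\otimes_B s_k\otimes_A t_k=0$ in $P\otimes_B Q\otimes_A P$, and applying $\id_P\otimes\mu$ gives $x\cdot 1_B=x=0$; symmetrically $p\otimes_B y=0$ for all $p$ forces $y=0$ via $\mu\otimes\id_Q$. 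Since the centrality $e\in(P\otimes_B Q)^A$ is then automatic exactly as in the computation following Proposition~\ref{prop-adjelts}, the interchanged proposition returns the $(\mathcal{Q},\mathcal{P})$ adjunction with counit $\mu$.

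The step I expect to be the main obstacle is that the $\mu$-multiplication sees only $\mu$, so the argument above delivers exactly one half of symmetric separable equivalence, that $B$ symmetric separably divides $A$; the complementary half, that $A$ symmetric separably divides $B$, i.e.\ $(\mathcal{P},\mathcal{Q})$ is adjoint with counit $\nu$, cannot be read off the $\mu$-ring. Indeed, unitality yields the isomorphisms $P\cong\Hom({}_BQ,{}_BB)$ and $Q\cong\Hom(P_B,B_B)$, whereas the missing adjunction demands $Q\cong\Hom({}_AP,{}_AA)$ (cf.\ Lemma~\ref{lemma-onto}), a statement about the left dual of $P$ that is not forced by its right dual. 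I would therefore obtain the remaining half by the mirror argument applied to the companion ring $(Q\otimes_A P,\nu)\cong\End Q_A$, whose unitality is characterised by the $(\mathcal{P},\mathcal{Q})$ adjunction with counit $\nu$ in identical fashion, reducing the problem to transferring a unit of $P\otimes_B Q$ to a unit of $Q\otimes_A P$; constructing the $\nu$-side coevaluation in $(Q\otimes_A P)^B$ from $e$ together with the splitting of $\nu$ — so that the two symmetric-division upgrades occur together in a separably equivalent context — is where I expect the genuine content of the proof to lie.
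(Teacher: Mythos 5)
Your first two stages are correct and, except for one detail, reproduce the paper's own argument: associativity from $\mu$ being a $B$-bimodule map, the unit in the ($\Leftarrow$) direction being the adjunction element of Proposition~\ref{prop-adjelts} read with $(A,B,P,Q,\nu)$ and $(B,A,Q,P,\mu)$ interchanged, and in ($\Rightarrow$) the reduction of unitality to eqs.~(\ref{eq3})--(\ref{eq4}) by stripping the spectator tensor factor. The detail is the stripping itself: the paper does it by citing Lemma~\ref{lemma-onto} (surjectivity of $q\mapsto\nu(-\otimes_B q)$ onto $\Hom({}_AP,{}_AA)$), whereas you do it with an element $\sum_k s_k\otimes_A t_k$ satisfying $\sum_k\mu(s_k\otimes t_k)=1_B$. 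Your variant is not only valid (it needs only that $\mu$ is onto); it is sounder than the paper's, because Lemma~\ref{lemma-onto} is proved from eqs.~(\ref{eq1})--(\ref{eq2}), i.e.\ it presupposes the $(\mathcal{P},\mathcal{Q})$-adjunction with counit $\nu$, which is not among the hypotheses ``separably equivalent plus unital $\mu$-ring'' available in the ($\Rightarrow$) direction. At that point the paper's proof assumes part of what is to be proved; your argument repairs this.

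The obstacle you flag in your third stage is real, and you should know the paper does not overcome it either: after deriving eqs.~(\ref{eq3}) and~(\ref{eq4}) --- that is, the $(\mathcal{Q},\mathcal{P})$-adjunction with counit $\mu$, so that $B$ symmetric separably divides $A$ --- the paper's proof simply stops, and nothing in it produces the complementary $(\mathcal{P},\mathcal{Q})$-adjunction with counit $\nu$ that the paper's definition of symmetric separable equivalence also demands. As you observe, unitality of the $\mu$-ring controls only the $B$-duals ($Q\cong\Hom(P_B,B_B)$ and $P\cong\Hom({}_BQ,{}_BB)$), not the $A$-dual $\Hom({}_AP,{}_AA)$; producing the second adjunction from the given data amounts to showing the (biseparable) context bimodule $P$ is Frobenius, which is of the same nature as the open problem \cite[Problem 2.8]{CK} quoted later in the paper, so your planned ``transfer of the unit'' should not be expected to be routine. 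The one reading under which everything closes up --- and the only reading under which the paper's appeal to Lemma~\ref{lemma-onto} is legitimate --- is to take the $(\mathcal{P},\mathcal{Q})$-adjunction with counit $\nu$ as a standing hypothesis; then unitality of the $\mu$-ring is exactly the missing $(\mathcal{Q},\mathcal{P})$-adjunction, your third stage becomes unnecessary, and both your proof and the paper's are complete. As literally stated, however, both your proposal and the paper's own proof establish only the equivalence of unitality with ``$B$ symmetric separably divides $A$,'' so the gap you honestly flagged is not an omission on your part that the paper fills.
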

\begin{proof}
A short computation depending only on $\mu$ being an $B$-bimodule homomorphism shows that the $\mu$-multiplication is 
an associative multiplication.
($\Leftarrow$) By Proposition~\ref{prop-adjelts}  there is  $\sum_j p_j \otimes q_j \in P \otimes_A Q$ 
for the adjoint pair $(\mathcal{Q},\mathcal{P})$, which satisfy the identities  
\begin{eqnarray}
\label{eq3}
\sum_j \mu(- \otimes_A p_j)q_j & = & \id_Q \\
\label{eq4}
\sum_j p_j \mu(q_j \otimes_A -) & = & \id_P 
\end{eqnarray}
 Define $1_{P \otimes Q} = \sum_j p_j \otimes_B q_j$.  Then for all $p \in P, q \in Q$, $1_{P \otimes Q} (p \otimes q) = p \otimes q$ follows from eq.~(\ref{eq4}) and $(p \otimes q)1_{P \otimes Q} = p \otimes q$ follows
from eq.~(\ref{eq3}).  

($\Rightarrow$) Suppose there is an identity,  $1_{P\otimes Q} = \sum_j p_j \otimes_B q_j$.  Then letting
$$p' := \sum_j p_j\mu (q_j \otimes p) - p$$ for arbitrary $p \in P$, left unitality gives $p' \otimes q = 0$ for every $q \in Q$.
By Lemma~\ref{lemma-onto}, the homomorphism $Q \rightarrow \Hom ({}_AP, {}_AA)$ given by 
$q \mapsto \nu(- \otimes_B q)$ is onto.  Since ${}_AP$ is finite projective, there are dual bases
$x_k, f_k$ such that $p' = \sum_k f_k(p')x_k = \sum_k \nu(p' \otimes q_k) x_k = 0$ for some $q_k \in Q$.  Thus eq.~(\ref{eq4}) follows.  Eq.~(\ref{eq3}) is similarly derived from right unitality.  
\end{proof} 

Similarly, $Q \otimes_A P$ has $\nu$-multiplication and an identity element $1 = \sum_i {q'}_i \otimes_A {p'}_i$, the unit of adjunction of $(\mathcal{P}, \mathcal{Q})$ in Proposition~\ref{prop-adjelts} satisfying the equations~(\ref{eq1}) and~(\ref{eq2}).  

For an example consider a separable Frobenius extension $A \supseteq B$ with Frobenius homomorphism $E: A \rightarrow B$ where $E|_B = \id_B$ with dual bases $\{x_i \}, \{ y_i \}$ \cite[Prop.\ 6.1]{LK1995}.  Then $A$
and $B$ are symmetric separably equivalent with context $P = {}_AA_B$, $Q = {}_BA_A$, $\nu:
A \otimes_B A \rightarrow A$ multiplication and $E: A \rightarrow B$ both split epis.  The units of
adjunction in $P \otimes_B Q \cong A \otimes_B A$ and $Q \otimes_A P \cong A$ are the \textit{dual bases tensor} $\sum_i x_i \otimes_B y_i$ (i.e., satisfying $\sum_i E(ax_i)y_i = a = \sum_i x_i E(y_ia)$ for all $a \in A$) and $1$, respectively, with $A \otimes_B A$ having the $E$-multiplication.

\section{Structure for symmetric separably equivalent rings }

Recall that a \textit{Frobenius bimodule} is a bimodule ${}_AW_B$ such that
$W$ is left and right finite projective, and $\Hom (W_B, B_B) \cong \Hom ({}_AW, {}_AA)$ as natural $B$-$A$-bimodules \cite{CK, M67, NEFE}.  For example, a
Frobenius extension $A \supseteq B$ has the natural bimodule ${}_AA_B$, which
is a Frobenius bimodule;  a progenerator module $M_T$ gives rise to the natural (Morita context) bimodule
${}_EM_T$, where $E = \End M_T$, which is a Frobenius bimodule (\cite[Theorem 1.1]{M67}, \cite[Theorem 2.3]{NEFE} and \cite[p.\ 261]{AF}).   

Given rings $A$ and $B$ linked by bimodule ${}_AP_B$, recall that \textit{$A$ is ${}_AP_B$-separable over $B$} if the evaluation mapping $e$, 
\begin{equation}
\label{eq: sep}
{}_AP \otimes_B \Hom ({}_AP, {}_AA)_A   \stackrel{e}{\rightarrow} {}_AA_A
\end{equation}
is a bimodule split epi \cite{KS}.  For example, a ring extension $A \supseteq B$
 is by definition \textit{separable} if the natural bimodule ${}_AA_B$ satisfies the split
epi condition in Eq.~(\ref{eq: sep}); or \textit{split} if the natural bimodule ${}_BA_A$
satisfies the split epi condition above \cite{NEFE}.  

\begin{theorem}
\label{theorem-list}
Suppose rings $A$ and $B$ are symmetric separably equivalent with context
$$({}_AP_B, {}_BQ_A,\nu: P \otimes_B Q \rightarrow A, \mu: Q \otimes_A P \rightarrow B, \sum_j {q'}_i \otimes_A {p'}_i, \sum_j p_j \otimes_B q_j),$$
satisfying Eqs.~(\ref{eq1}),~(\ref{eq2}),~(\ref{eq3}) and~(\ref{eq4}).  
Then 
\begin{enumerate}
\item ${}_AP_B$ is a Frobenius bimodule;
\item $A$ is $P$-separable over $B$;
\item there is a ring isomorphism  $\End P_B \cong P \otimes_B Q$ ;
\item the  mapping $A \stackrel{\lambda}{\into} \End P_B$ is a split, separable Frobenius extension. 
\end{enumerate} 
Conversely, (4) with the hypothesis that $P_B$ is a progenerator and ${}_AP$
finite projective implies that $A$ and $B$ are symmetric separably equivalent. 
\end{theorem}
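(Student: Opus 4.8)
The plan is to read off (1)--(3) directly from the adjunction data together with the one-sided finiteness of $P$, deduce (4) from (1) and (2) via the ring isomorphism of (3), and obtain the converse by composing a Morita equivalence with the split separable Frobenius example. For (1), Lemma~\ref{lemma-onto} already gives a $B$-$A$-bimodule isomorphism $Q\cong\Hom({}_AP,{}_AA)$, $q\mapsto\nu(-\otimes_B q)$. Running the argument of Lemma~\ref{lemma-onto} for the context $({}_BQ_A,{}_AP_B,\mu)$ of ``$B$ separably divides $A$'' produces a second $B$-$A$-bimodule isomorphism $Q\cong\Hom(P_B,B_B)$, namely $q\mapsto\mu(q\otimes_A-)$, with inverse $g\mapsto\sum_j g(p_j)q_j$ forced by Eqs.~(\ref{eq3}) and~(\ref{eq4}) (the only real check, $\psi\circ\beta=\id$, uses right $B$-linearity of $g$ and Eq.~(\ref{eq4})). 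Composing yields $\Hom(P_B,B_B)\cong\Hom({}_AP,{}_AA)$ as $B$-$A$-bimodules, and since $P$ is left and right finite projective, ${}_AP_B$ is a Frobenius bimodule. For (2) I would transport the evaluation map of Eq.~(\ref{eq: sep}) along $\Hom({}_AP,{}_AA)\cong Q$: it becomes exactly $\nu\colon P\otimes_B Q\to A$, which is a split epi of $A$-$A$-bimodules by hypothesis, so $A$ is $P$-separable over $B$.

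For (3), since $P_B$ is finite projective the natural map $P\otimes_B\Hom(P_B,B_B)\to\End P_B$, $p\otimes g\mapsto(p'\mapsto p\,g(p'))$, is a ring isomorphism; transporting it through $Q\cong\Hom(P_B,B_B)$ gives $\theta\colon P\otimes_B Q\to\End P_B$, $\theta(p\otimes q)(p')=p\,\mu(q\otimes_A p')$. A one-line computation, using only that $\mu$ is a $B$-$B$-bimodule map, shows $\theta$ carries the $\mu$-multiplication to composition and sends $\sum_j p_j\otimes q_j$ to $\id_P$ by Eq.~(\ref{eq4}); hence $\End P_B\cong P\otimes_B Q$ as rings. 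Under $\theta$ the left and right $A$-actions become $p\otimes q\mapsto ap\otimes q$ and $p\otimes q\mapsto p\otimes qa$, so the embedding $\lambda\colon A\into\End P_B$ is identified with $a\mapsto a\cdot 1_{P\otimes Q}=\sum_j ap_j\otimes q_j$.

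For (4) I take the Frobenius homomorphism to be $F:=\nu\colon\End P_B\cong P\otimes_B Q\to A$, an $A$-$A$-bimodule map by the previous paragraph. Separability of the extension follows from (2): the split epi $\nu$ supplies a separability element $s=\sum_i z_i\otimes_B w_i\in(P\otimes_B Q)^A$ with $\nu(s)=1_A$, from which one assembles a separability element of $\End P_B$ over $A$ (the general principle being that $P$-separability of $A$ over $B$ forces $\End P_B$ to be separable over $A$, cf.\ \cite{KS}), centrality being checked via Eqs.~(\ref{eq1}) and~(\ref{eq2}). Splitness is then immediate and clean: as $s$ is $A$-central it commutes with $\lambda(A)$, so $\phi_0:=\nu(s\cdot-)\colon\End P_B\to A$ is an $A$-$A$-bimodule map with $\phi_0(1)=\nu(s)=1_A$, i.e.\ an $A$-$A$-bimodule retraction of $\lambda$. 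The remaining point, and the one I expect to be hardest, is the Frobenius property itself: I must exhibit a Frobenius dual-basis tensor $\sum_k X_k\otimes_A Y_k$ in the centre of $\End P_B\otimes_A\End P_B$ realising $\Hom((\End P_B)_A,A_A)\cong\End P_B$ and verify $\sum_k X_k\,F(Y_k e)=e=\sum_k F(eX_k)\,Y_k$. After applying $\theta^{-1}$ these identities should reduce to Eqs.~(\ref{eq1})--(\ref{eq4}) together with the finite-projective dual basis of $P_B$; this is the main computational obstacle, and it is precisely where conclusion (1) enters, through the principle that a Frobenius bimodule with $P_B$ finite projective yields a Frobenius endomorphism extension \cite{M67, CK}.

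Finally, for the converse I would argue by composition of equivalences. With $P_B$ a progenerator, Morita theory makes $B$ and $\End P_B$ Morita equivalent via ${}_{\End P_B}P_B$ and its dual, and a Morita equivalence is in particular a symmetric separable equivalence, since the equivalence functors are adjoint on both sides with invertible (hence split) unit and counit and the bimodules are finite projective on each side. On the other side, statement (4) presents $A\into\End P_B$ as a split separable Frobenius extension, which is exactly the situation of the separable Frobenius extension example following the $\mu$-multiplication proposition; hence $\End P_B$ and $A$ are symmetric separably equivalent. Symmetric separable equivalence is transitive---compose the Frobenius functors, noting that the tensor product of biseparable Frobenius bimodules is again biseparable Frobenius---so $A$ and $B$ are symmetric separably equivalent, the hypothesis that ${}_AP$ is finite projective ensuring that the composite linking bimodules remain left and right finite projective.
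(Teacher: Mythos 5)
The fatal step is the separability half of (4). You claim that item (2) ($A$ is $P$-separable over $B$, i.e.\ the split epi $\nu$) forces $A \into \End P_B$ to be a \emph{separable} extension, with a separability element assembled from $s \in (P\otimes_B Q)^A$, $\nu(s)=1_A$. The Sugano/endomorphism-ring correspondence runs the other way: $P$-separability of $A$ over $B$ is equivalent to the extension $A \into \End P_B$ being \emph{split} --- and indeed your own map $\phi_0 = \nu(s\cdot -)$ proves exactly that direction, correctly --- whereas \emph{separability} of $A \into \End P_B$ corresponds to $B$ being $\Hom(P_B,B_B)\cong Q$-separable over $A$, i.e.\ to the \emph{other} context split epi $\mu: Q\otimes_A P \to B$. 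Concretely, writing $E = \End P_B \cong P\otimes_B Q$, a separability element for $E/A$ lies in $(E\otimes_A E)^E$ and must multiply to $1_E = \sum_j p_j\otimes_B q_j$; it is built from a splitting element $t=\sum_l u_l\otimes_A v_l \in (Q\otimes_A P)^B$ with $\mu(t)=1_B$, namely $\sum_{j,l}(p_j\otimes_B u_l)\otimes_A(v_l\otimes_B q_j)$, with $E$-centrality checked via Eqs.~(\ref{eq3}) and~(\ref{eq4}) and the $B$-centrality of $t$ --- not from $s$ via Eqs.~(\ref{eq1}) and~(\ref{eq2}). Your claimed ``general principle'' cannot be repaired: there exist separable, non-split Frobenius extensions (e.g.\ the basic construction $\End (kG)_{kH} \supseteq kG$ when the characteristic divides $[G:H]$), for whose associated contexts $\nu$ splits while $\mu$ does not; there the endomorphism extension is split but \emph{not} separable, so split $\nu$ alone can never yield separability. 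This is precisely why the paper derives separability from the $Q$-side (Sugano applied to $B$ being $Q$-separable over $A$), transferring it to $\End P_B$ through the ring anti-isomorphism $\End {}_BQ \cong P\otimes_B Q$ and the centrality identity~(\ref{eq: centered}).

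The rest of your argument is essentially sound. Parts (1)--(3) coincide with the paper's proof (Lemma~\ref{lemma-onto} on both contexts, the commutative triangle for $e$, and the map $p\otimes q\mapsto p\,\mu(q\otimes_A -)$), and deferring the Frobenius property of $\lambda$ to the endomorphism ring theorem for Frobenius bimodules is exactly what the paper does, which additionally records $\nu$ as the Frobenius homomorphism and the explicit dual-bases tensor~(\ref{dualbasestensor}). Your converse is a genuinely different route: the paper invokes the converse endomorphism-ring theorems to conclude that ${}_AP_B$ is a Frobenius biseparable bimodule and then cites Theorem~\ref{theorem-hollywood} (2 $\Rightarrow$ 1), whereas you compose the Morita equivalence $B \sim \End P_B$ with the split separable Frobenius extension $A \into \End P_B$. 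That plan can be made to work (the composite bimodules are ${}_AP_B$ and ${}_B\Hom(P_B,B_B)_A$, and the composite counits split), but as written it rests on two lemmas asserted without proof and not available in the paper: that a Morita equivalence is a symmetric separable equivalence, and that symmetric separable equivalence is transitive. Both are provable, so this is a sketch rather than an error; the separability step above is the genuine gap.
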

\begin{proof}
The bimodule ${}_AP_B$ satisfies by Lemma~\ref{lemma-onto}
\begin{equation}
\label{eq: cue}
{}_B\Hom (P_B,B_B)_A \cong {}_BQ_A \cong {}_B\Hom ({}_AP, {}_AA)_A
\end{equation}
via $q \mapsto \mu(q \otimes_A -)$ and the mapping $q \mapsto \nu(-\otimes_B q)$ in the lemma. Since $P$ is left and right finite projective (indeed progenerator), it 
is a Frobenius bimodule.

The evaluation mapping $e$ in Eq.~(\ref{eq: sep}) forms a commutative triangle  with
the context split epi $\nu: P \otimes_B Q \rightarrow A$ via the bimodule
isomorphism in Eq.~(\ref{eq: cue}).  Thus, $A$ is $P$-separable over $B$.

The mapping $P \otimes_B Q \rightarrow \End P_B$ given by 
$p \otimes_B q \mapsto p\mu(q \otimes_A -)$ is an isomorphism of $A$-bimodules, since $P_B$ is finite projective, so $\End P_B \cong P \otimes_B \Hom (P_B, B_B)$ as $A$-$A$-bimodules.  But $Q \cong \Hom (P_B, B_B)$
via $q \mapsto \mu(q \otimes_A -)$ as noted before.
The inverse mapping, $\End P_B \rightarrow P \otimes_B Q$ is in fact 
given by $ \alpha \mapsto \sum_j \alpha(p_j) \otimes_B q_j$. (For example, $ \sum_j \alpha(p_j) \mu(q_j \otimes_A -) = \alpha$ by eq.~(\ref{eq4}).) This is a ring isomorphism since
\begin{eqnarray*}
 (\sum_k \beta(p_k) \otimes_B q_k)(\sum_j \alpha(p_j) \otimes_B q_j) 
& = & \\
\sum_{k,j} \beta(p_k\mu(q_k \otimes_A \alpha(p_j))) \otimes_B q_j 
&= &   \sum_j \beta(\alpha(p_j)) \otimes_B q_j 
\end{eqnarray*}
for $\alpha, \beta \in \End P_B$.  

 Since ${}_AP_B$ is a Frobenius bimodule, it follows from the endomorphism ring theorem in \cite[Theorem 2.5]{NEFE} that
 $\lambda: A \into \End P_B$ is a Frobenius extension of
rings.  Under an identification of
$\End P_B$ and $P \otimes_B Q$ in the previous paragraph, $A$ embeds via
$a \mapsto \sum_j ap_j \otimes_B q_j$, and $\nu: P \otimes_B Q \rightarrow A$
is a Frobenius homomorphism, with dual bases tensor
\begin{equation}
\label{dualbasestensor}
\sum_{i,j} p_j \otimes_B {q'}_i \otimes_A {p'}_i \otimes_B q_j 
\end{equation}
by an amusing exercise.  It follows from theorems in \cite{KS} that
$A \into \End P_B$ is a split extension, and the anti-monomorphism $\rho: A \into \End {}_BQ$
is a separable extension (once you apply what's proven for $P$ to $Q$, see also \cite[Theorem 3.1]{LK99}).  But there is an anti-isomorphism of rings,  $\End {}_BQ \cong \Hom ({}_BQ, {}_BB) \otimes_B Q \cong P \otimes_B Q$ (using Lemma~\ref{lemma-onto}, $p \mapsto \mu(- \otimes_A p)$) given
by $\beta \in \End {}_BQ$ is mapped into $\sum_j p_j \otimes_B \beta(q_j)$.  
From the identity~(\ref{eq: centered}), 
it follows that the ring extensions  $A \into \End P_B$ and $A \into \End {}_BQ$
are isomorphic in the usual terms of commutative diagrams. Hence $A \into \End P_B$ is also a separable
ring extension.

The converse (4) $\Rightarrow$ symmetric separable equivalence of $A$ and $B$ follows from 
\cite[Theorem 2.8]{NEFE}, which shows that ${}_AP_B$ is a Frobenius bimodule, and by theorems in \cite[Theorem 1, Proposition 2]{KS} (corrected and simplified in \cite[Theorem 3.1]{LK99}), which show that ${}_AP_B$ is also biseparable.  The result follows from Theorem~\ref{theorem-hollywood}
(2 $\Rightarrow$ 1) below. Note that
\cite[Theorem 3.1(4)]{LK99} is applied at one point using $P' := {}_{B^{\rm op}}P_{A^{\rm op}}$ and that $B^{\rm op}$ is $P'$-separable over $A^{\rm op}$ unwinds as 
$B$ is $\Hom (P_B, B_B)$-separable over $A$.  
\end{proof} 
Similarly (or symmetrically), ${}_BQ_A$ is a Frobenius bimodule, $B$ is $Q$-separable over $A$,
$Q \otimes_A P \cong \End Q_A$ with  $\nu$-multiplication and $B \into \End Q_A$ is a split, separable Frobenius extension.  

The theorem is useful for showing which  properties of symmetrical separably equivalent rings are shared.  Since $B$ and $\End P_B$ are Morita equivalent, and $A \into \End P_B$ has very special properties, it follows that properties like ``polynomial identity algebra'' that are Morita invariants and
descend along algebra monomorphisms, are then shared properties of this weaker equivalence.  Also Morita invariants like global dimension that descend along projective split, separable or Frobenius
extensions are symmetric separable invariants:   \cite[Theorem 5.1]{LK1995} shows
 that $D(A) =  D(B)$ from the Theorem, part (4) above.
See also \cite[Theorems 5.2 and 6.1]{LK1995} and \cite[Theorem 2.6]{CK}.  

\section{Characterising symmetric separable equivalence}

In this section we characterise symmetric separable equivalence. One more definition is useful for this objective. 

\begin{definition}
A left and right finite projective bimodule ${}_AV_B$ is called \textbf{biseparable} if $A$ is $V$-separable over $B$ and $B$ is $\Hom (V_B, B_B)$-separable over $A$ \cite[Definition 2.4]{CK}. 
Using a reflexivity of finite projective modules, this is equivalent to the two evaluation mappings below  being split bimodule epimorphisms, 
\begin{enumerate}
\item ${}_AV \otimes_B \Hom ({}_AV, {}_AA)_A \stackrel{e_1}{\longrightarrow} {}_AA_A$ ;
\item ${}_B\Hom (V_B, B_B) \otimes_A V_B \stackrel{e_2}{\longrightarrow} {}_BB_B$. 
\end{enumerate}
\end{definition}

An example of the definition is the natural bimodule ${}_AA_B$ for
a split, separable, right and left finite projective ring extension $A \supseteq B$ \cite{CK}.

\begin{theorem}
\label{theorem-hollywood}
The following are equivalent conditions on two rings $A$ and $B$:
\begin{enumerate}
\item $A$ and $B$ are symmetric separably equivalent;
\item $A$ and $B$ are linked by a Frobenius biseparable bimodule ${}_AP_B$;
\item $A$ and $B$ are separably equivalent with at least one context bimodule Frobenius.
\end{enumerate}
\end{theorem}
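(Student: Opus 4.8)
The plan is to prove the cycle of implications $(1) \Rightarrow (2) \Rightarrow (3) \Rightarrow (1)$, leaning heavily on Theorem~\ref{theorem-list} and its symmetric counterpart, which together already do most of the work for the first step. For $(1) \Rightarrow (2)$, I would observe that Theorem~\ref{theorem-list} yields that ${}_AP_B$ is a Frobenius bimodule and that $A$ is $P$-separable over $B$, while the symmetric statement following it gives that $B$ is $Q$-separable over $A$. By the isomorphism $Q \cong \Hom(P_B,B_B)$ established in Eq.~(\ref{eq: cue}), the condition that $B$ is $Q$-separable over $A$ is exactly the condition that $B$ is $\Hom(P_B,B_B)$-separable over $A$. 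Hence both evaluation maps $e_1$ and $e_2$ in the definition of biseparable split, so ${}_AP_B$ is a Frobenius biseparable bimodule, giving $(2)$.

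For $(2) \Rightarrow (3)$, I would unpack the biseparability of ${}_AP_B$ directly. Set $Q := \Hom(P_B,B_B)$, which since $P_B$ is finite projective is reflexively identified with $\Hom({}_AP,{}_AA)$ via the Frobenius property; these identifications are exactly what make $Q$ a genuine $B$-$A$-bimodule. The splitting of $e_1$ gives a split epi $\nu: P \otimes_B Q \to A$ (so $A$ separably divides $B$), and the splitting of $e_2$ gives a split epi $\mu: Q \otimes_A P \to B$ (so $B$ separably divides $A$). This is precisely separable equivalence, and since ${}_AP_B$ was assumed Frobenius, at least one context bimodule is Frobenius, giving $(3)$. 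The only care needed here is to check that the evaluation maps, once transported across the reflexivity and Frobenius isomorphisms, coincide with honest bimodule epimorphisms $\nu$ and $\mu$ of the required variance.

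For the closing implication $(3) \Rightarrow (1)$, I would assume $A$ and $B$ are separably equivalent with context $(P,Q,\nu,\mu)$ and with ${}_AP_B$ Frobenius. The Frobenius condition supplies a bimodule isomorphism $\Hom(P_B,B_B) \cong \Hom({}_AP,{}_AA)$, and by Lemma~\ref{lemma-onto} (applied to the separable-divides data) the context partner $Q$ is identified with $\Hom({}_AP,{}_AA)$. The aim is to produce the adjunction units of Proposition~\ref{prop-adjelts}, i.e. centered elements satisfying Eqs.~(\ref{eq1})--(\ref{eq4}), which upgrade the one-sided separable divisions into the two Frobenius (adjoint-in-either-order) functor pairs demanded by symmetric separable equivalence. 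By the hom-tensor adjunction the functor $\mathcal{P}$ always has right adjoint coinduction ${}_B\Hom({}_AP,{}_A-)$; the Frobenius isomorphism identifies this with $\mathcal{Q} = Q \otimes_A -$, so $(\mathcal{P},\mathcal{Q})$ is an adjoint pair and $\nu$ is its counit by Proposition~\ref{prop-adjelts}. Running the same argument with $P$ and $Q$ interchanged (using that $\mu$ exhibits $B$ as separably dividing $A$, together with the Frobenius identification) gives $(\mathcal{Q},\mathcal{P})$ as an adjoint pair with counit $\mu$, so $\mathcal{P}$ and $\mathcal{Q}$ are adjoint in either order, which is symmetric separable equivalence.

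The step I expect to be the main obstacle is $(3) \Rightarrow (1)$, and specifically verifying that the bare Frobenius isomorphism together with one-sided separable divisions actually forces the adjunction in both orders. The subtlety is that separable equivalence gives split epis $\nu$ and $\mu$ but no a priori compatibility between the associated splittings and the Frobenius pairing; one must exploit the Frobenius isomorphism $\Hom(P_B,B_B)\cong\Hom({}_AP,{}_AA)$ to convert the coinduction right adjoint into $\mathcal{Q}$, and then confirm that the existing $\nu$ really serves as the counit rather than merely a split epi onto $A$. Here Lemma~\ref{lemma-oneforall} is the crucial lubricant: it shows that once the adjunction $(\mathcal{P},\mathcal{Q})$ is in place, any split epi $P\otimes_B Q\to A$ (in particular our $\nu$) yields a split counit, so no delicate matching of splittings is required. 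The symmetric half demands the analogous care for $(\mathcal{Q},\mathcal{P})$, where one must be sure that the Frobenius hypothesis on ${}_AP_B$ alone (not on $Q$) suffices, which it does because a Frobenius bimodule and its dual are interchangeable as Frobenius data.
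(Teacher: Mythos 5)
Your implications $(1)\Rightarrow(2)$ and $(2)\Rightarrow(3)$ are fine and essentially match the paper (the paper leaves $(2)\Rightarrow(3)$ implicit, so spelling it out is harmless). The genuine gap is in $(3)\Rightarrow(1)$, at the step where you claim that ``by Lemma~\ref{lemma-onto} (applied to the separable-divides data) the context partner $Q$ is identified with $\Hom({}_AP,{}_AA)$.'' Lemma~\ref{lemma-onto} is not available there: its hypothesis is the adjunction situation of Proposition~\ref{prop-adjelts}, i.e.\ the existence of adjunction elements satisfying eqs.~(\ref{eq1}) and~(\ref{eq2}), and that adjunction is precisely what $(3)\Rightarrow(1)$ is trying to establish. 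Under mere separable equivalence the identification is in fact false in general: if $(P,Q,\nu,\mu)$ is a separable equivalence context, then so is $(P, Q\oplus Y, \nu\circ(\id_P\otimes \pi_Q), \mu\circ(\pi_Q\otimes\id_P))$ for any left and right finite projective bimodule ${}_BY_A$, so the context bimodule $Q$ is not determined up to isomorphism and need not be a dual of $P$. Consequently your subsequent steps --- identifying coinduction $\Hom({}_AP,{}_A-)$ with $Q\otimes_A-$, and then invoking Lemma~\ref{lemma-oneforall} to split the counit of $(\mathcal{P},\mathcal{Q})$ --- all rest on an unjustified (indeed generally false) premise, since Lemma~\ref{lemma-oneforall} needs a split epi $P\otimes_B Q'\to A$ where $Q'$ is the actual dual appearing in the adjunction, not the given context $Q$.

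The repair is exactly the paper's route, which goes $(3)\Rightarrow(2)$ rather than $(3)\Rightarrow(1)$ directly: do not identify $Q$ with the dual, but instead transport the splittings. Define $h: Q\to \Hom({}_AP,{}_AA)$ by $q\mapsto \nu(-\otimes_B q)$ and $g: Q \to \Hom(P_B,B_B)$ by $q\mapsto \mu(q\otimes_A -)$; these are bimodule homomorphisms (no bijectivity claimed), and the evaluation maps factor as $\nu = e_1\circ(\id_P\otimes_B h)$ and $\mu = e_2\circ(g\otimes_A \id_P)$. Since $\nu$ and $\mu$ are split epis, so are $e_1$ and $e_2$; hence ${}_AP_B$ is biseparable, and being Frobenius by hypothesis, condition $(2)$ holds. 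Then $(2)\Rightarrow(1)$ proceeds with $Q'$ taken to be the (left $=$ right) dual of $P$: the Frobenius property makes $\mathcal{P}$ and $Q'\otimes_A-$ adjoint in either order, and only now does Lemma~\ref{lemma-oneforall} apply, converting the split epis $P\otimes_B Q'\to A$ and $Q'\otimes_A P\to B$ supplied by biseparability into split counits. Your closing remark that ``no delicate matching of splittings is required'' is correct, but only after the adjunction has been set up with the dual of $P$ in place of the original context bimodule $Q$.
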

\begin{proof}
(1 $\Rightarrow$ 2) Theorem~\ref{theorem-list} informs us that a context bimodule ${}_AP_B$ is Frobenius and $A$ is $P$-separable over $B$.  Also $B$ is $Q$-separable over $A$.  Since context bimodule ${}_BQ_A \cong {}_B\Hom (P_B, B_B)_A$ by Lemma~\ref{lemma-onto}, and context bimodule mapping
$\mu$ and $\nu$ are split epis, it follows that ${}_AP_B$ is biseparable. 

(2 $\Rightarrow$ 1) Suppose ${}_BQ_A$ is either the left or right dual of ${}_AP_B$, unique up to isomorphism. Then the associated functors between categories of modules  formed by tensoring and denoted as before by $\mathcal{P}$ and $\mathcal{Q}$ are Frobenius functors, adjoint functors in either order. 
The counits of adjunction are split epis by Lemma~\ref{lemma-oneforall}, since there are split epis $P \otimes_B Q \rightarrow A$ and $Q \otimes_A P \rightarrow B$ from the hypothesis that ${}_AP_B$ is biseparable.  

(3 $\Rightarrow$ 2) Suppose rings $A$ and $B$ are separably equivalent with context $({}_AP_B, {}_BQ_A,\,$ $ \nu\!:\! P \otimes_B Q \rightarrow A,\, \mu\!:\! Q \otimes_A P \rightarrow B)$. We show that bimodule $P$ is biseparable.  Define a mapping $h: Q \rightarrow \Hom ({}_AP, {}_AA)$ by
$q \mapsto \nu(- \otimes_B q)$. Note that $h$ is a $B$-$A$-bimodule homomorphism. Note that the evaluation mapping $e_1: P \otimes_B \Hom ({}_AP,{}_AA) \rightarrow A$ satisfies $\nu = e_1 \circ (\id_P \otimes_B h)$.  It follows that $e_1$ is a split
epimorphism of $A$-$A$-bimodules, since $\nu$ is. 
Similarly, define a mapping $g: Q \rightarrow \Hom (P_B, B_B)$ by $q \mapsto \mu(q \otimes_A -)$,
a $B$-$A$-bimodule homomorphism. The evaluation mapping $e_2: \Hom (P_B, B_B) \otimes_A P \rightarrow B$ satisfies $\mu = e_2 \circ (g \otimes_B \id_P)$, whence $e_2$ is a split epimorphism
of $B$-$B$-bimodules. \end{proof}
Note that the proof of 3 $\Rightarrow$ 2 establishes the following.

\begin{prop} Separably equivalent rings 
are linked by a biseparable bimodule.
\end{prop}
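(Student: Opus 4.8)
The plan is to read this off directly from the verification already carried out in the direction (3 $\Rightarrow$ 2) of Theorem~\ref{theorem-hollywood}, the point being that \emph{that} argument never uses the Frobenius hypothesis that appears in condition (3). So I would begin by fixing separably equivalent rings $A$ and $B$, linked by a context $({}_AP_B, {}_BQ_A, \nu: P \otimes_B Q \to A, \mu: Q \otimes_A P \to B)$ in which $P$ and $Q$ are left and right finite projective and both $\nu$ and $\mu$ are split bimodule epimorphisms. The assertion to be proved is exactly that ${}_AP_B$ is biseparable.

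The single elementary fact I would record first is that a split epi factoring through a second map forces that second factor to be a split epi: if $f = \alpha \circ \beta$ and $f \circ s = \id$, then $\alpha \circ (\beta \circ s) = \id$, so $\alpha$ is split by $\beta \circ s$. With this in hand I reproduce the two factorizations from the (3 $\Rightarrow$ 2) proof. Define the $B$-$A$-bimodule homomorphisms $h: Q \to \Hom({}_AP, {}_AA)$ by $q \mapsto \nu(-\otimes_B q)$ and $g: Q \to \Hom(P_B, B_B)$ by $q \mapsto \mu(q \otimes_A -)$. A direct check on simple tensors shows that the evaluation maps satisfy $\nu = e_1 \circ (\id_P \otimes_B h)$ and $\mu = e_2 \circ (g \otimes_B \id_P)$. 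Since $\nu$ and $\mu$ are split epis, the factoring fact makes $e_1$ and $e_2$ split epimorphisms of $A$-$A$- and $B$-$B$-bimodules respectively, and these are precisely the two evaluation conditions in the definition of biseparable. Hence ${}_AP_B$ is biseparable, which is the claim.

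The crux of the observation—and the reason this deserves to be stated separately—is that nowhere in this chain is the Frobenius property of $P$ or $Q$ invoked; the Frobenius hypothesis of condition (3) enters Theorem~\ref{theorem-hollywood} only to upgrade the conclusion from biseparable to \emph{Frobenius} biseparable. I therefore expect no genuine obstacle here: the entire content is the bookkeeping remark that biseparability of a linking bimodule is already a consequence of separable equivalence alone, so that separably equivalent rings are always linked by a biseparable bimodule.
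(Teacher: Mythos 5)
Your proposal is correct and is essentially the paper's own proof: the paper explicitly states that the Proposition follows from the $(3 \Rightarrow 2)$ argument of Theorem~\ref{theorem-hollywood}, which uses only the split epimorphisms $\nu$ and $\mu$ (via the factorizations $\nu = e_1 \circ (\id_P \otimes_B h)$ and $\mu = e_2 \circ (g \otimes_B \id_P)$) and never the Frobenius hypothesis. Your added remark spelling out the ``split epi factors through'' lemma is exactly the implicit step in the paper's argument.
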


  When a biseparable bimodule linking
two rings is also Frobenius, is the question in  \cite[Problem 2.8]{CK}.  The lemma below provides an affirmative answer when both rings are symmetric algebras.  

A symmetric algebra is an algebra with nondegenerate trace, or Frobenius algebra with Nakayama automorphism an inner automorphism: for example, a semisimple algebra, a group algebra or  a unimodular Hopf algebra with square antipode that is conjugation by a grouplike element \cite{NEFE}.    The two notions of separable equivalence coincide on the class of symmetric algebras \cite{P}. 
Suppose $k$ is the ground field of $A$ and $B$.  Then a bimodule ${}_AP_B$ has
a third $B$-$A$-bimodule dual, $P^* := \Hom (P, k)$. In these terms, an algebra $A$ is symmetric if  ${}_AA_A \cong {}_A{A^*}_A$.

In general, a Frobenius algebra $A$, with  Nakayama automorphism $\alpha: A \rightarrow A$, satisfies a bimodule isomorphism,  ${}_AA_A \cong {}_{\alpha}{A^*}_A$, where an $A$-module ${}_AM$ receives a twist
${}_{\alpha}M$ by defining $a . m = \alpha(a)m$ for every $a\in A, m \in M$. 
If $\alpha$ is an inner automorphism, it is an easy exercise to show that
${}_{\alpha}M \cong {}_AM$.  Recall that where $\alpha$ and $\beta$ are automorphisms of $A$ and $B$, an $\alpha$-$\beta$-\textit{Frobenius bimodule} is a bimodule ${}_AW_B$ such that
$W$ is left and right finite projective, and there is a $B$-$A$-bimodule isomorphism ${}_B\Hom (W_B, B_B)_A \cong {}_{\beta}\Hom ({}_AW, {}_AA)_{\alpha}$  \cite{CK, M67, LK99}.

\begin{lemma}
\label{lemma-FrobeniusBimodule}
Suppose $A$ and $B$ are Frobenius algebras with Nakayama automorphisms $\alpha$
and $\beta$, respectively.  Then any  left and right finite projective bimodule ${}_AP_B$ is an
$\alpha$-$\beta$-Frobenius bimodule.  In particular, if $A$ and $B$ are symmetric algebras, $P$ is
automatically a Frobenius bimodule. 
\end{lemma}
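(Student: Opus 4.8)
The plan is to route both one-sided duals of $P$ through the $k$-linear dual $P^* = \Hom(P,k)$, which carries the natural $B$-$A$-bimodule structure $(b\,\phi\,a)(p) = \phi(apb)$, and to extract the two Nakayama twists from the Frobenius forms of $A$ and $B$. First I would fix nondegenerate forms $\tau\colon A \to k$ and $\sigma\colon B \to k$ whose Nakayama automorphisms are $\alpha$ and $\beta$, normalized so that $\tau(uv) = \tau(v\alpha(u))$ and $\sigma(uv) = \sigma(v\beta(u))$; then the defining isomorphisms ${}_AA_A \cong {}_{\alpha}{A^*}_A$ and ${}_BB_B \cong {}_{\beta}{B^*}_B$ are realized concretely by $a \mapsto \tau(a\,\cdot\,)$ and $b \mapsto \sigma(b\,\cdot\,)$.

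Next I would produce the right-hand isomorphism $\Lambda\colon \Hom(P_B,B_B) \to {}_{\beta}(P^*)$, $f \mapsto \sigma \circ f$. This factors as the post-composition map $\Hom(P_B,B_B) \cong \Hom(P_B,{B^*}_B)$ induced by $B \cong B^*$, followed by the tensor-hom isomorphism $\Hom(P_B,{B^*}_B) \cong P^*$, $f \mapsto f(\,\cdot\,)(1_B)$; both are isomorphisms, the latter unconditionally. Checking the actions shows $\Lambda$ is right $A$-linear, while on the left the identity $\sigma(b\,f(p)) = \sigma(f(p)\,\beta(b))$ coming from the Nakayama relation forces precisely a $\beta$-twist, so $\Lambda$ is an isomorphism of $B$-$A$-bimodules onto ${}_{\beta}(P^*)$. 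The parallel argument with $\tau$, using the left $A$-linearity of $g \in \Hom({}_AP,{}_AA)$, gives a $B$-$A$-bimodule isomorphism $\Pi\colon \Hom({}_AP,{}_AA) \to (P^*)_{\alpha^{-1}}$, $g \mapsto \tau \circ g$, the twist now landing on the right $A$-action.

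Combining the two, and using that right twists compose, yields
$${}_{\beta}\Hom({}_AP,{}_AA)_{\alpha} \cong {}_{\beta}\big((P^*)_{\alpha^{-1}}\big)_{\alpha} \cong {}_{\beta}(P^*) \cong \Hom(P_B,B_B)$$
as $B$-$A$-bimodules, which is exactly the assertion that ${}_AP_B$ is an $\alpha$-$\beta$-Frobenius bimodule; note that this twisted-dual isomorphism is in fact automatic for any bimodule over Frobenius $A$ and $B$, and the hypothesis that $P$ is left and right finite projective is invoked only to meet the corresponding clause in the definition. For the final sentence, if $A$ and $B$ are symmetric algebras then $\alpha$ and $\beta$ are inner, so by the remark preceding the lemma the twists are invisible, ${}_{\beta}(P^*) \cong P^* \cong (P^*)_{\alpha^{-1}}$, and both one-sided duals are isomorphic to the untwisted $P^*$ and hence to one another, making $P$ a genuine Frobenius bimodule. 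I expect the only real obstacle to be bookkeeping: verifying that $\Lambda$ and $\Pi$ respect both actions and, above all, getting each Nakayama twist on the correct side and in the correct direction, so that the left twist lands on the $B$-side as $\beta$ and the two right $A$-twists cancel to the identity rather than accumulating to $\alpha^2$.
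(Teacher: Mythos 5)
Your proposal is correct and takes essentially the same route as the paper: both one-sided duals of $P$ are identified with the $k$-dual $P^*$ by post-composing with the (Nakayama-twisted) isomorphisms $A \cong A^*$, $B \cong B^*$ and applying the hom-tensor adjunction $\Hom(P_B, {B^*}_B) \cong P^*$. The only difference is one of completeness rather than method: the paper writes out the untwisted symmetric case and dismisses the general Frobenius case as ``similar with more careful bookkeeping,'' whereas you do the twisted bookkeeping explicitly (with the correct normalization $\tau(uv)=\tau(v\alpha(u))$ and correct placement of the twists $\beta$ and $\alpha^{-1}$) and obtain the symmetric case as a corollary via inner automorphisms.
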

\begin{proof}
Suppose $B$ and $A$ are symmetric algebras. Then the three duals of a bimodule ${}_AP_B$ coincide, since 
$${}_B\Hom (P_B, B_B)_A \cong {}_B\Hom (P_B, {B^*}_B)_A \cong {}_B{P^*}_A$$
and similarly 
${}_B\Hom ({}_AP, {}_AA)_A \cong {}_B{P^*}_A$ (making use of the hom-tensor adjoint relation \cite[20.6]{AF}).  The general argument is similar with more careful bookkeeping using the calculus of twisted modules in for example  \cite{LK99}, and more careful use of \cite[20.6]{AF}.
\end{proof}
The lemma implies that a subalgebra pair of Frobenius algebras $A \supseteq B$ with Nakayama automorphisms $(\alpha,\beta)$, where $A_B$ is finite projective, forms an $(\alpha,\beta)$-Frobenius extension as  defined on \cite[p.\ 401]{LK99}; e.g., a  left coideal subalgebra of a finite-dimensional Hopf algebra. 
The following is  obvious from  Lemma~\ref{lemma-FrobeniusBimodule} and Theorem~\ref{theorem-hollywood}. 
\begin{cor}
A separable equivalence between symmetric algebras $A$ and $B$ is a symmetric separable equivalence.
\end{cor}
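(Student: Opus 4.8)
The plan is to combine the two results just established and specialize. Assume $A$ and $B$ are symmetric algebras that are separably equivalent, so by the Definition of separable equivalence there is a context $({}_AP_B, {}_BQ_A, \nu: P \otimes_B Q \rightarrow A, \mu: Q \otimes_A P \rightarrow B)$ in which both context bimodules $P$ and $Q$ are left and right finite projective.

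First I would invoke Lemma~\ref{lemma-FrobeniusBimodule}. Since $A$ and $B$ are symmetric algebras, their Nakayama automorphisms are inner, so the $\alpha$-$\beta$-twisting collapses and the lemma asserts that any left and right finite projective bimodule between them is automatically an honest Frobenius bimodule. Applying this to the context bimodule ${}_AP_B$ shows that $P$ is Frobenius (the same reasoning applies to ${}_BQ_A$, although a single Frobenius context bimodule is all that is needed in what follows).

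Next I would feed this into Theorem~\ref{theorem-hollywood}. The data assembled so far---separable equivalence of $A$ and $B$ together with the fact that at least one context bimodule is Frobenius---is precisely condition (3) of that theorem. The implication (3 $\Rightarrow$ 1) then yields that $A$ and $B$ are symmetric separably equivalent, which is the assertion of the corollary.

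There is no genuine obstacle here: the corollary is an immediate consequence once the two quoted results are in place, because the only substantive input---that left and right finite projective bimodules over symmetric algebras are Frobenius---has already been isolated in Lemma~\ref{lemma-FrobeniusBimodule}. The one point demanding a moment's care is the verification of that lemma's hypothesis, namely that the context bimodule produced by separable equivalence is left and right finite projective; but this is built into the definition of \textbf{separably divides}, so the argument proceeds without any further work.
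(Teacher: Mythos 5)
Your proposal is correct and is exactly the argument the paper intends when it says the corollary is ``obvious from Lemma~\ref{lemma-FrobeniusBimodule} and Theorem~\ref{theorem-hollywood}'': the context bimodules of a separable equivalence are left and right finite projective by definition, hence Frobenius over symmetric algebras by the lemma, and then condition (3) of the theorem applies to give condition (1). You have merely spelled out the details the paper leaves implicit, so there is nothing to add.
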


The construction for any algebra $A$ as a subalgebra of a symmetric algebra structure on 
$A \times A^*$ is also noteworthy in this context \cite[p.\ 443]{Lam}.
It is interesting to extend the theory in this paper from module categories and exact functors such as $\mathcal{P}$ and $\mathcal{Q}$ to general abelian, additive or exact categories (cf.\ \cite{LK1995, P}).  

\end{document}